\newtheorem{thm}{{\sc TТheorem}}
\newtheorem{cor}{{\sc Corrolary}}
\begin{document}

\title{The automorphisms of endomorphism semigroups of free Burnside groups}
\author{V.\,S.~Atabekyan}


\maketitle


\begin{abstract}
In this paper we describe the automorphism groups of the endomorphism semigroups of free Burnside groups $B(m,n)$ for odd exponents $n\ge1003$. We prove, that the groups $Aut(End(B(m,n)))$ and $Aut(B(m,n))$ are canonically isomorphic. In particular, if the groups $Aut(End(B(m,n)))$ and $Aut(End(B(k,n)))$ are isomorphic, then $m=k$.
\end{abstract}


\section{Введение}

The free Burnside group $B(m,n)$ is the free group of rank $m$ of the variety $\mathcal{B}_n$ of all groups satisfying the identity $x^n=1$. The group $B(m,n)$ is isomorphic to the quotient group of the absolutely free group $F_m$ of rank $m$ by normal subgroup $F_{m}^n$ generated by all $n$-th powers of its elements. It is well known (see \cite[Theorem 2.15]{Amon} that for all odd $n\ge665$ and rank $m> 1$ the group $B(m,n)$ is infinite (and even has exponential growth). According to one other theorem of S.I.Adyan (see \cite[Theorem 3.21]{Amon}) for $m> 1$ and odd periods $n\ge 665$ the center of $B(m,n)$ is trivial and hence, $B(m,n)$ is isomorphic to the inner automorphism subgroup $Inn(B(m,n))$ of the automorphism group $Aut(B(m,n))$. Other results on automorphisms and monomorphisms of the groups $B(m,n)$ appeared relatively recently in \cite{A09}-\cite{RC}. In this paper we describe the automorphism groups of the endomorphism semigroups of $B(m,n)$ for odd exponents $n\ge1003$. In particular, we prove, that the groups $Aut(End(B(m,n)))$ and $Aut(End(B(k,n)))$ are isomorphic if and only if $m=k$. This is a particular problem about $End(A)$, for $A$ a free algebra in a certain variety, was raised by B.I.Plotkin in \cite{P}. Analogous problems for $End(F)$ with $F$ a finitely generated free group or free monoid were solved by Formanek in \cite{F} and Mashevitzky and Schein in \cite{MS} respectively.

\medskip
For an arbitrary group $G$ consider a natural homomorphism $$\tau_G : Aut(End(G)) \to Aut(Aut(G))$$ taking each automorphism from the endomorphism semigroup of the group $G$ to its restriction on the subgroup of all invertible elements $Aut(G)$ of this semigroup. Obviously, any inner automorphism of $Aut(G)$ extends to an automorphism of the semigroup $End(G)$ in the natural way. Therefor, if all automorphisms of a group $Aut(G)$ are inner, then $\tau_G$ is surjective homomorphism. In particular, this is true for complete groups $Aut(G)$. Recall, that a group is called \textit{complete}, if its center is trivial and all its automorphisms are inner. More over, if $\phi:Inn(G)\to Inn(G)$ is an automorphism of the inner automorphism group of $G$ and $\phi(i_g)=i_{\alpha(g)},$ then it is not hard to check that  $\alpha:G\to G$ is an automorphism of $G$. By virtu of the relation $\alpha\circ i_g\circ\alpha^{-1}= i_{\alpha(g)}$ we get that the automorphism $\phi$ extends to the automorphism $i_\alpha$ of the semigroup $End(G)$. The subgroup $Inn(G)$ is characteristic in $Aut(G)$ for a complete groups $G$ by the criterion of Burnside. Hence, for any complete group $G$ the homomorphism $$\iota_G : Aut(End(G)) \to Aut(Inn(G))$$ taking each automorphism from $Aut(End(G))$ to its restriction on the subgroup $Inn(G)$ also is surjective.

\medskip
J.~Dyer and E.~Formanek in \cite{DF} proved that the automorphism group $Aut(F)$ is complete for any finitely generated non-abelian free group $F$. Therefore, we have an isomorphism $Aut(Aut(F))\simeq Aut(F)$. Later on Formanek in \cite{F} showed that the equality $Ker(\tau_F) = 1$ holds for the same free groups $F$. Thus, the groups $Aut(End(F))$ and $Aut(Aut(F))$ also are isomorphic and we have $Aut(End(F))\simeq Aut(Aut(F))\simeq Aut(F)$.

\medskip
In \cite{At13} we have proved that for any $m>1$ and odd $n\ge1003$ the inner automorphism group $Inn(B(m,n))$ is the unique normal subgroup of $Aut(B(m,n))$ among all its subgroups, which is isomorphic to a free Burnside group $B(s,n)$ of some rank $s\ge 1$. From this it follows that $Inn(B(m,n))$ is a characteristic subgroup in $Aut(B(m,n))$ and hence, the group $Aut(B(m,n))$ is complete. As was noted above, the restriction of every automorphism of the endomorphism semigroup $End(B(m,n))$ on the subgroup $Inn(B(m,n))$ induces an automorphism, because $Inn(B(m,n))$ is a characteristic subgroup in $Aut(B(m,n))$.

The aim of this paper is to prove the following

\begin{thm}\label{T}
Let us $\Phi$ and $\Psi$ be arbitrary automorphisms of the endomorphism semigroup $End(B(m,n))$ of the free Burnside group $B(m,n)$ of odd period $n\ge 1003$ and rank $m>1$. Then $\Phi=\Psi$ if and only if the restrictions of the automorphisms $\Phi$ and $\Psi$ on the subgroup $Inn(B(m,n))$ coincide, that is $$\Phi\big{|}_{Inn(B(m,n))}=\Psi\big{|}_{Inn(B(m,n))}.$$
\end{thm}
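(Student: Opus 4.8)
The forward direction is immediate, so the plan is to prove the converse: setting $\Theta=\Psi^{-1}\circ\Phi$, it suffices to show that an automorphism $\Theta$ of $End(B(m,n))$ which restricts to the identity on $Inn(B(m,n))$ must be the identity on all of $End(B(m,n))$. Write $G=B(m,n)$; since $Z(G)=1$, the map $g\mapsto i_g$ embeds $G$ onto $Inn(G)$. First I would upgrade the hypothesis from $Inn(G)$ to $Aut(G)$. As $Aut(G)$ is the unit group of the monoid $End(G)$, $\Theta$ restricts to an automorphism of $Aut(G)$; since $Aut(G)$ is complete (the quoted result of \cite{At13}), this restriction is inner, say conjugation by some $\beta\in Aut(G)$. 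Because it fixes $Inn(G)$ pointwise we get $i_{\beta(g)}=\beta i_g\beta^{-1}=i_g$ for every $g$, whence $\beta=1$ by triviality of the center. Thus $\Theta$ fixes $Aut(G)$ pointwise.

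Next comes the basic commutation identity. For every $\phi\in End(G)$ and $g\in G$ one has $\phi\circ i_g=i_{\phi(g)}\circ\phi$; applying $\Theta$ (which fixes each $i_g$) and writing $\psi=\Theta(\phi)$ gives $\psi\circ i_g=i_{\phi(g)}\circ\psi$, while intrinsically $\psi\circ i_g=i_{\psi(g)}\circ\psi$. Comparing these, and running the same computation for $\Theta^{-1}$, I would deduce that $d_g:=\phi(g)^{-1}\psi(g)$ centralizes both $\mathrm{Im}\,\phi$ and $\mathrm{Im}\,\psi$, and that $g\mapsto d_g$ is a homomorphism of $G$ into that common centralizer. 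The goal becomes to prove $d_g\equiv 1$.

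The case analysis rests on the structure theory of $B(m,n)$ for odd $n\ge 1003$: every abelian subgroup is cyclic and two distinct maximal cyclic subgroups meet trivially, so a non-cyclic subgroup has trivial centralizer, whereas the centralizer of a nontrivial cyclic subgroup is the maximal cyclic subgroup $C=\langle c_0\rangle$ of order $n$ containing it. Hence if $\mathrm{Im}\,\phi$ is non-cyclic then $d_g\equiv 1$ and $\Theta(\phi)=\phi$ at once; in particular the power endomorphisms $\rho_\ell\colon x_i\mapsto x_i^\ell$, which have non-cyclic image for $m>1$ and $\ell\not\equiv 0$, are fixed by $\Theta$. When $\mathrm{Im}\,\phi$ is cyclic, $d$ takes values in $C$ and $\Theta$ preserves the set $E_C$ of all homomorphisms $G\to C$. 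Identifying $E_C$ with $(\mathbb{Z}/n)^m$ by the exponents on $c_0$, right multiplication by the (fixed) automorphisms of $G$, which surject onto $GL_m(\mathbb{Z}/n)$ on the abelianization $G^{ab}=(\mathbb{Z}/n)^m$, acts linearly, so the correction $\phi\mapsto d$ is $GL_m(\mathbb{Z}/n)$-equivariant; together with the homogeneity $\Theta(\phi^{\langle\ell\rangle})=\Theta(\phi)^{\langle\ell\rangle}$ supplied by the fixed $\rho_\ell$ (using $\phi\circ\rho_\ell=\phi^{\langle\ell\rangle}$, where $\phi^{\langle\ell\rangle}(g)=\phi(g)^\ell$), this should force $\Theta(\phi)=\phi^{\langle 1+\lambda_C\rangle}$ for a single unit $1+\lambda_C\in\mathbb{Z}/n$.

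The crux, which I expect to be the main obstacle, is to show $\lambda_C=0$ for every $C$. When $c_0$ is primitive in $G^{ab}$, in particular for $C=\langle x_1\rangle$, the composition law on $E_C$ is nondegenerate and multiplicativity of $\Theta$ forces $1+\lambda_C$ to be an idempotent unit, hence $1$; this already gives $\Theta(e_1)=e_1$ for the standard retraction $e_1$ onto $\langle x_1\rangle$. The delicate case is a maximal cyclic $C$ generated by an element of $[G,G]$, say $c_0=[x_1,x_2]$: here $E_C$ is a null semigroup, and since every endomorphism carries commutators to commutators, no composition can move $C$ to an abelianization-nontrivial subgroup, so multiplicativity yields nothing. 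The plan is to transfer the vanishing from the solved case: choose an endomorphism $\eta$ of non-cyclic image with $\eta(x_1)=c_0$; then $\eta\circ e_1\in E_C$ is fixed by $\Theta$ because both $\eta$ (non-cyclic image) and $e_1$ are fixed, and this pins $\lambda_C=0$, after which equivariance gives $\Theta|_{E_C}=\mathrm{id}$. Since every endomorphism has non-cyclic image or image in some maximal cyclic $C$, it follows that $\Theta=\mathrm{id}$ on all of $End(G)$, that is $\Phi=\Psi$.
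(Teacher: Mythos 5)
Your argument is sound and reaches the theorem, but its second half follows a genuinely different route from the paper's. The common skeleton: reduce to showing that an automorphism $\Theta$ of $End(B(m,n))$ fixing $Inn(B(m,n))$ pointwise is the identity; the identity $\phi\circ i_g=i_{\phi(g)}\circ\phi$; Adyan's theorem that centralizers of nontrivial elements are cyclic of order $n$; hence every endomorphism with non-cyclic image is fixed (the paper phrases this with $\mathrm{Im}\,\Phi(\varepsilon)$, you with $\mathrm{Im}\,\phi$ via $\Theta^{-1}$ --- equivalent). For the cyclic-image case the paper Nielsen-reduces $\varepsilon$ to the normal form $\varepsilon(y_1)=a^d$, $\varepsilon(y_i)=1$ for $i\neq 1$, and then splits on $n$: for prime $n$ it proves the retraction onto $\langle y_1\rangle$ is fixed by an idempotency-plus-Nielsen computation (primality is needed there because the exponent $t_1$ is not known to be invertible), and for composite $n=n_1n_2$ it factors $\varepsilon=\delta_1\circ\delta_2$ through two endomorphisms of non-cyclic image. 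You instead linearize: identifying $E_C\cong(\mathbb{Z}/n)^m$, equivariance under the fixed automorphisms plus homogeneity from the fixed power maps $\rho_\ell$ force $\Theta$ to act on $E_C\setminus\{0\}$ as multiplication by a unit $u_C$; idempotency of $e_1$ then gives $u_{\langle x_1\rangle}=1$ with no primality assumption (the scalar form already makes the exponent a unit); and the fixed composite $\eta\circ e_1$ transfers $u_C=1$ to every maximal cyclic $C$, your ``delicate commutator case'' included. Your route avoids both the Nielsen normal form and the prime/composite dichotomy; what it costs is the verification behind ``this should force'' --- which does go through, since $SL_m(\mathbb{Z}/n)$ is transitive on primitive vectors, the stabilizer of $e_1$ fixes only its own multiples, and homogeneity handles imprimitive vectors --- plus the (elementary) surjectivity of $Aut(B(m,n))\to GL_m(\mathbb{Z}/n)$ via Nielsen and power automorphisms. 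Also, invoking completeness of $Aut(B(m,n))$ at the start is superfluous: automorphisms have non-cyclic image, so your own centralizer lemma already fixes them pointwise, which is exactly how the paper proceeds.

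Two small repairs are needed. First, the trivial endomorphism: your scalar statement covers only $E_C\setminus\{0\}$, and your closing sentence tacitly assumes $\Theta(0)=0$; the paper devotes a separate argument to this case (using the automorphism $b_j\mapsto b_j^{-1}$). In your framework it is one line: $0=e_1\circ e_2$ and both retractions are fixed, so $\Theta(0)=0$. Second, $g\mapsto d_g$ is in general an anti-homomorphism, $d_{gh}=d_h d_g$; this is harmless, since wherever you use it the values lie in a cyclic, hence abelian, subgroup.
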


From Theorem \ref{T} immediately follows
\begin{cor}\label{c1}
The maps $$\tau_{B(m,n)} : Aut(End(B(m,n))) \to Aut(Aut(B(m,n)))$$
and $$\iota_{B(m,n)} : Aut(End(B(m,n))) \to Aut(Inn(B(m,n)))$$ are isomorphisms for any rank $m>1$ and odd period $n\ge 1003$.
\end{cor}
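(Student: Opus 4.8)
The converse implication is trivial, so the content is the forward one. Writing $G=B(m,n)$ and $\Theta=\Psi^{-1}\circ\Phi\in Aut(End(G))$, the plan is to reduce the statement to the assertion that \emph{an automorphism $\Theta$ of $End(G)$ fixing $Inn(G)$ pointwise must be the identity}. Two inputs will be used throughout. First, the \emph{master relation} $f\circ i_g=i_{f(g)}\circ f$, valid for every $f\in End(G)$ and $g\in G$, since $f(gxg^{-1})=f(g)f(x)f(g)^{-1}$. Second, the structure theory of $B(m,n)$ for odd $n\ge1003$ (see \cite{Amon}): the centralizer of any nontrivial element is cyclic, so the centralizer of any nonabelian subgroup is trivial; and, because $G$ is relatively free of rank $m>1$ in $\mathcal B_n$, an endomorphism may be prescribed freely on the generators, identifying $End(G)$ with $G^m$ and giving an abundance of endomorphisms with prescribed nonabelian image and prescribed collapsing behaviour.

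The first step would handle all endomorphisms with nonabelian image. Fixing $f$ and putting $h=\Theta(f)$, I would apply the master relation to $h$, obtaining $h\circ i_g=i_{h(g)}\circ h$, and then apply $\Theta^{-1}$ (which also fixes $Inn(G)$ pointwise) using $\Theta^{-1}(h)=f$; this yields $f\circ i_g=i_{h(g)}\circ f$. Comparing with $f\circ i_g=i_{f(g)}\circ f$ forces $h(g)^{-1}f(g)\in C_G(\mathrm{Im}\,f)$ for every $g$. When $\mathrm{Im}\,f$ is nonabelian this centralizer is trivial, so $\Theta(f)=f$. In particular this covers every element of $Aut(G)$, whose image is all of $G$, giving $\Theta\big|_{Aut(G)}=\mathrm{id}$ directly and strengthening the remark that $Aut(G)$ is complete.

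The remaining endomorphisms have abelian image. The constant map $0$ is the two-sided zero of $End(G)$, hence is fixed by any automorphism, so $\Theta(0)=0$ for free. For a nontrivial $f$ with image in a maximal cyclic subgroup $\langle r\rangle$ of order $n$, the computation above only gives $h(g)^{-1}f(g)\in C_G(\mathrm{Im}\,f)=\langle r\rangle$, which does not force $h=f$. To kill this residual cyclic ambiguity I would factor $f$ through nonabelian pieces: using freeness, produce $A,B\in End(G)$, \emph{each with nonabelian image}, such that $f=A\circ B$. The idea is to take $\mathrm{Im}\,B$ to be a nonabelian subgroup on which a collapsing endomorphism $A$ realizes the prescribed powers $f(x_i)\in\langle r\rangle$, while $A$ is kept nonabelian through its values on generators not met by $\mathrm{Im}\,B$; the hypothesis $m>1$ supplies the necessary room. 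Granting this, the first step gives $\Theta(A)=A$ and $\Theta(B)=B$, whence $\Theta(f)=\Theta(A)\circ\Theta(B)=A\circ B=f$.

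Assembling the cases, $\Theta$ fixes every endomorphism, so $\Theta=\mathrm{id}$ and $\Phi=\Psi$. I expect the main obstacle to be exactly this last step: constructing, for an arbitrary cyclic-image endomorphism, an honest factorization into endomorphisms with nonabelian image. This is where the fine structure of $B(m,n)$ is indispensable (cyclic centralizers, and the presence of nonabelian free Burnside subgroups of every rank), and the small ranks $m=2,3$ will require a careful, separate choice of the collapsing map $A$.
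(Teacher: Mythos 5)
Your plan proves the wrong half of the statement, and only partially. The Corollary asserts that $\tau_{B(m,n)}$ and $\iota_{B(m,n)}$ are \emph{isomorphisms}; your argument (an automorphism $\Theta$ of $End(G)$ fixing $Inn(G)$ pointwise is the identity) is precisely the paper's Theorem~1, which yields \emph{injectivity} of both maps. You never address \emph{surjectivity}, and it is not automatic: one must show that every automorphism of $Aut(G)$, and every automorphism of $Inn(G)$, extends to an automorphism of the semigroup $End(G)$. In the paper this comes from the completeness of $Aut(B(m,n))$ for $m>1$ and odd $n\ge 1003$, quoted from \cite{At13}: every automorphism of $Aut(G)$ is conjugation by some $\beta\in Aut(G)$, hence extends to $End(G)$ as $\varepsilon\mapsto\beta\circ\varepsilon\circ\beta^{-1}$; and since $Inn(G)\simeq G$ has trivial center and is characteristic in $Aut(G)$, every automorphism of $Inn(G)$ has the form $i_g\mapsto i_{\alpha(g)}$ with $\alpha\in Aut(G)$ and extends the same way. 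Without this ingredient you have injective homomorphisms, not isomorphisms.

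Inside the injectivity argument, your first two steps are correct and in fact cleaner than the paper's (applying $\Theta^{-1}$ puts the centralizer condition on $\operatorname{Im}f$ rather than on $\operatorname{Im}\Theta(f)$, and recognizing the trivial endomorphism as the zero of the semigroup replaces the paper's ad hoc argument with the generator-inverting automorphism). The genuine gap is the final step: factoring an arbitrary cyclic-image endomorphism $f$ as $A\circ B$ with \emph{both} images nonabelian. For $m\ge 3$ this works (e.g.\ $B\colon y_1\mapsto y_1,\ y_2\mapsto y_3,\ y_3\mapsto y_1y_3y_1^{-1}$ and $A\colon y_1\mapsto a,\ y_2\mapsto b,\ y_3\mapsto 1$), and for composite $n$ your factorization is exactly the paper's Case~2. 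But for $m=2$ with $n$ \emph{prime} it runs into an obstruction, not a matter of "careful choice": in any factorization $f=A_1\circ\cdots\circ A_k$ with all images nonabelian, some $A_i$ must be non-injective (since $f$ is not injective), so you need an endomorphism of $B(2,n)$ with nonabelian image and nontrivial kernel --- equivalently, a nonabelian $2$-generated subgroup of $B(2,n)$ generated by a non-free pair in $\mathcal{B}_n$. Nothing in the cited literature (Adian's cyclic-centralizer and finite-subgroup theorems, Atabekyan's subgroup theorems) produces such a subgroup for prime $n$; all known constructions give subgroups that are free in the variety, and the existence of non-free ones is exactly the kind of fine structural fact you cannot defer. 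The paper's Case~1 exists precisely to avoid this: for prime $n$ it takes the idempotent $\alpha$ ($y_1\mapsto y_1$, $y_i\mapsto 1$ for $i\ne 1$), uses $\Phi(\alpha)\circ\Phi(\alpha)=\Phi(\alpha)$ and primality to force $t_1\equiv 1\pmod n$, kills the remaining exponents with the Nielsen automorphisms $\lambda_j$ (which are fixed by $\Phi$, by your own step~1, and satisfy $\alpha\circ\lambda_j=\alpha$), concluding $\Phi(\alpha)=\alpha$, and only then writes $\varepsilon=\gamma\circ\alpha$ with a \emph{single} nonabelian-image factor $\gamma$. You would need to import that idempotent argument to close the case $m=2$, $n$ prime.
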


Taking into account that the groups $Aut(B(m,n))$ are complete for ranks $m>1$ and odd periods $n\ge 1003$ we get
\begin{cor}
For any automorphism $\Phi \in Aut(End(B(m,n)))$ there exists an automorphism $\alpha\in Aut(B(m,n))$ such that $\Phi(\varepsilon) = \alpha \circ \varepsilon\circ \alpha^{-1}$ for each endomorphism $\varepsilon\in End(B(m,n))$.
\end{cor}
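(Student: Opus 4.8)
The only-if direction is immediate, so I shall concentrate on the if-direction. The plan is to set $\Theta=\Psi^{-1}\circ\Phi$, observe that $\Theta$ is an automorphism of $End(B(m,n))$ whose restriction to $Inn(B(m,n))$ is the identity, and to prove that \emph{any} such $\Theta$ equals the identity on all of $End(B(m,n))$; reading the resulting equality back off gives $\Phi=\Psi$. Throughout write $B=B(m,n)$ and $i_g$ for the inner automorphism $x\mapsto gxg^{-1}$. First I would upgrade ``$\Theta$ fixes $Inn(B)$ pointwise'' to ``$\Theta$ fixes $Aut(B)$ pointwise''. Since $Inn(B)$ is characteristic in $Aut(B)$ and $Aut(B)$ is complete (both by the results quoted from \cite{At13}), the restriction $\Theta|_{Aut(B)}$ is an automorphism of the complete group $Aut(B)$, hence inner, say conjugation by $\beta\in Aut(B)$. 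As $\Theta$ fixes every $i_g$, the element $\beta$ centralizes $Inn(B)$; but $\beta i_g\beta^{-1}=i_{\beta(g)}$ together with the triviality of the center of $B$ forces $\beta(g)=g$ for all $g$, so $\beta=\mathrm{id}$ and $\Theta|_{Aut(B)}=\mathrm{id}$ (the same holds for $\Theta^{-1}$).

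The engine for passing from automorphisms to arbitrary endomorphisms is the identity $\varepsilon\circ i_g=i_{\varepsilon(g)}\circ\varepsilon$, valid for every $\varepsilon\in End(B)$ and $g\in B$. Applying $\Theta$ and using $\Theta(i_h)=i_h$ gives $\delta\circ i_g=i_{\varepsilon(g)}\circ\delta$ with $\delta:=\Theta(\varepsilon)$; comparing with the same identity $\delta\circ i_g=i_{\delta(g)}\circ\delta$ for $\delta$ yields that $\delta(g)^{-1}\varepsilon(g)$ centralizes $\operatorname{im}\delta$ for every $g$ (and, running the argument for $\Theta^{-1}$, that $\varepsilon(g)^{-1}\delta(g)$ centralizes $\operatorname{im}\varepsilon$). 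Here I would invoke the structure theory of $B(m,n)$ for odd $n\ge 665$ from \cite{Amon}: the center is trivial, every abelian subgroup is cyclic, and the centralizer of a nontrivial element is the unique maximal cyclic subgroup containing it. Consequently a subgroup of $B$ has trivial centralizer precisely when it is non-cyclic. Hence, whenever $\operatorname{im}\varepsilon$ is non-cyclic we obtain $C_B(\operatorname{im}\varepsilon)=1$, so $\delta(g)=\varepsilon(g)$ for all $g$ and $\Theta(\varepsilon)=\varepsilon$. This disposes of every $\varepsilon$ with non-cyclic (equivalently, non-abelian) image, in particular of all injective endomorphisms.

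The remaining, and genuinely harder, case is that of endomorphisms with cyclic image, where the centralizer collapses and the relation above only tells us that $\operatorname{im}\delta$ lies in the same maximal cyclic subgroup $M$ as $\operatorname{im}\varepsilon$. For these I would pass to the abelianization $B^{ab}\cong(\mathbb{Z}/n)^m$: an endomorphism with image in $M=\langle c\rangle$ is the same as a homomorphism $B\to M$, i.e. a vector $\mathbf{k}\in(\mathbb{Z}/n)^m$ via $\varepsilon(x_i)=c^{k_i}$, and $\Theta$ restricts to a bijection $T$ of this set. The point is that $T$ respects two independent structures. On the one hand, precomposition by automorphisms of $B$ realizes the full $GL_m(\mathbb{Z}/n)$-action (the map $Aut(B)\to GL_m(\mathbb{Z}/n)$ is onto, since Nielsen automorphisms induce the standard generators), and since $\Theta$ fixes automorphisms it intertwines this action, $T(A\mathbf{k})=A\,T(\mathbf{k})$. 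On the other hand the composition of two such endomorphisms is again of this form, giving a monoid structure that $T$ must preserve. Choosing $M=\langle x_1\rangle$ (a self-centralizing maximal cyclic subgroup) and the retraction $\rho_1\colon x_1\mapsto x_1,\ x_j\mapsto 1\ (j\ge 2)$, equivariance pins $T(\mathbf{e}_1)$ down to a unit multiple $v\,\mathbf{e}_1$; while $\rho_1$ acts as a left identity on these endomorphisms, a property preserved by the semigroup automorphism $T$, and the left identities are exactly the vectors with first coordinate $1$, which forces $v=1$. Equivariance then propagates $T=\mathrm{id}$ to all unimodular vectors and the monoid relations extend it to the remaining vectors, so $\Theta(\varepsilon)=\varepsilon$ here as well (the zero endomorphism being automatically fixed as the unique zero of $End(B)$). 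Combining the two cases gives $\Theta=\mathrm{id}$, whence $\Phi=\Psi$.

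I expect the main obstacle to be precisely this last case: the centralizer argument that settles non-cyclic images degenerates for abelian images, and isolating the single scalar unit (showing $v=1$ rather than merely $v\in(\mathbb{Z}/n)^\times$) is exactly what the idempotent/left-identity bookkeeping achieves; the careful interplay between the $GL_m(\mathbb{Z}/n)$-action and the composition monoid, together with the treatment of the non-unimodular vectors, is where the real work lies.
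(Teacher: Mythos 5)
The statement you were asked to prove is an \emph{existence} statement: for every $\Phi\in Aut(End(B(m,n)))$ there is an $\alpha\in Aut(B(m,n))$ with $\Phi(\varepsilon)=\alpha\circ\varepsilon\circ\alpha^{-1}$ for all $\varepsilon$. Your proposal never produces such an $\alpha$; what you prove instead is the paper's Theorem 1 (the uniqueness/injectivity statement: two automorphisms of $End(B(m,n))$ that agree on $Inn(B(m,n))$ are equal) --- your opening sentence about an ``only-if direction'' and your standing assumption that a second automorphism $\Psi$ with $\Psi|_{Inn}=\Phi|_{Inn}$ is \emph{given} make this explicit. The good news is that the missing bridge is short and uses an ingredient already in your first paragraph: $\Phi$ preserves the group of units $Aut(B)$ of the monoid $End(B)$, and since $Aut(B)$ is complete (\cite{At13}), the restriction $\Phi|_{Aut(B)}$ is inner, i.e.\ there is $\alpha\in Aut(B)$ with $\Phi(\gamma)=\alpha\circ\gamma\circ\alpha^{-1}$ for every $\gamma\in Aut(B)$. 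Then $\Psi(\varepsilon):=\alpha\circ\varepsilon\circ\alpha^{-1}$ is an automorphism of $End(B)$ agreeing with $\Phi$ on $Inn(B)$, and your uniqueness argument (equivalently, the paper's Theorem 1) yields $\Phi=\Psi$, which is exactly the corollary. This two-line deduction from Theorem 1 plus completeness \emph{is} the paper's entire proof of the corollary; you applied completeness only to $\Theta=\Psi^{-1}\circ\Phi$, where it serves no purpose that the trivial-center argument would not already serve, and omitted the place where it is genuinely needed.

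Concerning the uniqueness argument itself (which is the bulk of your text): it is essentially correct and in places cleaner than the paper's proof of Theorem 1. Your two-sided centralizer trick (running the engine for $\Theta^{-1}$ as well) lets you split on whether $\operatorname{Im}\varepsilon$ is cyclic, whereas the paper splits on $\operatorname{Im}\Phi(\varepsilon)$ and then tacitly treats $\operatorname{Im}\varepsilon$ as cyclic too; your $(\mathbb{Z}/n)^m$ monoid-plus-equivariance treatment of cyclic images is uniform where the paper separates prime and composite $n$ and builds explicit factorizations through non-cyclic-image endomorphisms; and fixing the trivial endomorphism as the unique zero of the semigroup is neater than the paper's sign-flip argument. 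Two points need repair, though neither is fatal: (i) the claim that $Aut(B)\to GL_m(\mathbb{Z}/n)$ is onto ``since Nielsen automorphisms induce the standard generators'' is false as stated --- Nielsen automorphisms only realize matrices of determinant $\pm1$; fortunately your argument uses only the stabilizer of $\mathbf{e}_1$ and transitivity on unimodular vectors, for which $SL_m(\mathbb{Z}/n)$ (realized by Nielsen transvections) suffices; (ii) ``the monoid relations extend it to the remaining vectors'' needs the non-obvious (but true) fact that every vector of $(\mathbb{Z}/n)^m$ is a scalar multiple of a unimodular vector, so that it can be written as $\mathbf{k}\cdot\mathbf{l}=k_1\mathbf{l}$ with $\mathbf{k}=(k_1,1,0,\dots,0)$ and $\mathbf{l}$ both unimodular, hence both already fixed by $T$.
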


\begin{cor}
For any odd $n\ge1003$ the groups $Aut(End(B(m,n)))$ and $Aut(End(B(k,n)))$ are isomorphic if and only if $m=k$.
\end{cor}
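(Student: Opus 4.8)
The plan is to derive this classification from Corollary~\ref{c1}, the completeness of $Aut(B(m,n))$, and the uniqueness result of \cite{At13}. The implication $m=k\Rightarrow Aut(End(B(m,n)))\simeq Aut(End(B(k,n)))$ is immediate, since in that case the two endomorphism semigroups literally coincide; all the content lies in the converse. So suppose $Aut(End(B(m,n)))\simeq Aut(End(B(k,n)))$. First I would reduce to a statement about automorphism groups of the Burnside groups themselves. Using the isomorphism $\iota_{B(m,n)}$ of Corollary~\ref{c1} together with $Inn(B(m,n))\simeq B(m,n)$ (the center of $B(m,n)$ being trivial), I obtain
$$Aut(End(B(m,n)))\ \simeq\ Aut(Inn(B(m,n)))\ \simeq\ Aut(B(m,n)).$$
(Alternatively, $\tau_{B(m,n)}$ gives $Aut(End(B(m,n)))\simeq Aut(Aut(B(m,n)))$, and completeness of $Aut(B(m,n))$ gives $Aut(Aut(B(m,n)))\simeq Aut(B(m,n))$, reaching the same conclusion.) Hence the assumed isomorphism produces an isomorphism $\theta:Aut(B(m,n))\to Aut(B(k,n))$.

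Next I would use $\theta$ to transport the distinguished inner subgroup and thereby read off the rank. The image $\theta(Inn(B(m,n)))$ is a normal subgroup of $Aut(B(k,n))$ isomorphic to $Inn(B(m,n))\simeq B(m,n)$, that is, to a free Burnside group of rank $m$. By the uniqueness result of \cite{At13}, the only normal subgroup of $Aut(B(k,n))$ isomorphic to some free Burnside group $B(s,n)$ is $Inn(B(k,n))$ itself; therefore $\theta(Inn(B(m,n)))=Inn(B(k,n))\simeq B(k,n)$, and consequently $B(m,n)\simeq B(k,n)$. Finally, passing to abelianizations turns this into $(\mathbb{Z}/n\mathbb{Z})^m\simeq(\mathbb{Z}/n\mathbb{Z})^k$, finite groups of orders $n^m$ and $n^k$; comparing orders forces $m=k$, which closes the argument.

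The step requiring the most care is the transport argument. I cannot merely invoke that $Inn(B(m,n))$ is characteristic in $Aut(B(m,n))$, because $\theta$ is an isomorphism between two \emph{different} groups, so characteristicity inside a single group does not directly apply to the image. What is actually needed is the sharper statement from \cite{At13}: the \emph{uniqueness} of a normal free-Burnside subgroup inside $Aut(B(k,n))$. This is an intrinsic, isomorphism-invariant property of $Aut(B(k,n))$, so it constrains $\theta(Inn(B(m,n)))$ regardless of how $\theta$ is built, and this is precisely what pins the image down to $Inn(B(k,n))$. Once that identification is secured, the rank becomes a genuine isomorphism invariant and the elementary abelianization computation finishes the proof.
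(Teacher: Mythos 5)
Your proof is correct, but after its first step it takes a genuinely different route from the paper's. The reduction $Aut(End(B(m,n)))\simeq Aut(B(m,n))$ via Corollary \ref{c1} (equivalently via completeness of $Aut(B(m,n))$) is exactly the paper's first sentence. From there the paper simply cites Theorem 1.3 of \cite{At13}, which asserts outright that $Aut(B(m,n))$ and $Aut(B(k,n))$ are isomorphic if and only if $m=k$, and stops. You instead re-prove that cited theorem: you transport $Inn(B(m,n))$ through the hypothesized isomorphism $\theta$, invoke the uniqueness of a normal subgroup isomorphic to some $B(s,n)$ inside $Aut(B(k,n))$ (the other result of \cite{At13}, quoted in the paper's introduction) to pin the image down to $Inn(B(k,n))$, conclude $B(m,n)\simeq B(k,n)$, and then recover $m=k$ by comparing the orders $n^m$ and $n^k$ of the abelianizations $(\mathbb{Z}/n\mathbb{Z})^m$ and $(\mathbb{Z}/n\mathbb{Z})^k$. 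Your remark that characteristicity alone would not suffice --- since $\theta$ relates two \emph{different} groups, one needs the uniqueness statement, which is an isomorphism-invariant property of $Aut(B(k,n))$ --- is exactly the right point, and it is presumably how the cited Theorem 1.3 is proved in \cite{At13} itself. What the paper's route buys is brevity (two sentences); what yours buys is self-containedness: modulo facts already quoted in the introduction, no black box is needed, and it becomes transparent why the rank is recoverable from $Aut(End(B(m,n)))$. One small point applying equally to both arguments: the statement places no restriction $m,k>1$, yet Corollary \ref{c1}, the triviality of the center, and the uniqueness theorem are all stated only for ranks $>1$, so the degenerate case where $B(1,n)$ is cyclic would need a separate easy remark in either approach.
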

\begin{proof}
It follows from Corollary \ref{c1} that $Aut(End(B(m,n)))$ is isomorphic to $Aut(B(m,n))$. By Theorem 1.3 from \cite{At13} the automorphism groups $Aut(B(m, n))$ and $Aut(B(k, n))$ are
isomorphic if and only if $m = k$.
\end{proof}

\section{The proof of the main result}
Obviously, to prove Theorem \ref{T} it suffices to show that if the restriction of an automorphism $\Phi$ from $End(B(m,n))$ on the subgroup $Inn(B(m,n))$ is the identity automorphism, that is
\begin{equation}\label{e0}
\Phi\big{|}_{Inn(B(m,n))}=1_{Inn(B(m,n))},
\end{equation}
then $\Phi$ is the identity automorphism of the semigroup $End(B(m,n))$.

Suppose that the equality \eqref{e0} holds. We will prove that $$\Phi=1_{End(B(m,n))}$$ or equivalently we will prove that $\Phi(\varepsilon)=\varepsilon$ holds for each $\varepsilon\in End(B(m,n))$. More precisely we will show that the equality \begin{equation}\label{e}\varepsilon(a)^{-1}\cdot\Phi(\varepsilon)(a)=1\end{equation} holds for any $a\in B(m,n)$ and $\varepsilon\in End(B(m,n))$.

The inner automorphism of $B(m,n)$ induced by element $a\in B(m,n)$ is denoted by $i_a$. Consider an arbitrary endomorphism $\varepsilon\in End(B(m,n))$ and apply the product of endomorphisms $\varepsilon\circ i_a$ to an element $x\in B(m,n)$. By definition we have $$(\varepsilon\circ i_a)(x)=\varepsilon(i_a(x))=\varepsilon(a)\varepsilon(x)\varepsilon(a^{-1})=(i_{\varepsilon(a)}\circ\varepsilon)(x).$$
Hence, the equality
\begin{equation}\label{e1}
\varepsilon\circ i_a=i_{\varepsilon(a)}\circ\varepsilon
\end{equation}
holds.

To both sides of the equality \eqref{e1} applying the automorphism $\Phi$ and taking into account \eqref{e0} we get the equality
\begin{equation}\label{e2}
\Phi(\varepsilon)\circ i_a=i_{\varepsilon(a)}\circ\Phi(\varepsilon).
\end{equation}
Now the both sides of the equality \eqref{e2} applying to an arbitrary element $x\in B(m,n)$ we obtain
\begin{equation}\label{e3}
\Phi(\varepsilon)(a)\cdot\Phi(\varepsilon)(x)\cdot\Phi(\varepsilon)(a)^{-1}=\varepsilon(a)\cdot\Phi(\varepsilon)(x)\cdot \varepsilon(a)^{-1}.
\end{equation}
From the equality \eqref{e3} it follows that for any $a, x\in B(m,n)$ the element $\Phi(\varepsilon)(x)$ belongs to the centralizer of the element $\varepsilon(a)^{-1}\cdot\Phi(\varepsilon)(a)$.

By theorem of S.I.Adyan (see. \cite{Amon}, Theorem 3.2), the centralizer of each non-trivial element $b$ of the group $B(m,n)$ is a cyclic group of order $n$. Therefor, if there exist elements $x, y\in B(m,n)$ such that $\Phi(\varepsilon)(x)$ and $\Phi(\varepsilon)(y)$ don't belong in a same cyclic subgroup of  $B(m,n)$, then we have
\begin{equation}\label{e4}
\varepsilon(a)^{-1}\cdot\Phi(\varepsilon)(a)=1
\end{equation}
for any $a\in B(m,n)$. In particular, we get that $\Phi(\varepsilon)=\varepsilon$ for arbitrary automorphism $\varepsilon$, because $m>1$. Thus, to complete the proof of Theorem \ref{T} it remain to consider the case when $\operatorname{Im}(\Phi(\varepsilon))$ is a cyclic group.

\medskip
Suppose that $\operatorname{Im}(\Phi(\varepsilon))$ is a cyclic subgroup.

Let us $\{b_i\}_{i\in I}$ be a set of free generators for $B(m,n)$ and $|I|>1$. First we assume that $\varepsilon$ is the trivial endomorphism, i.e. $\varepsilon(b_i)=1$ for all $i\in I$ and verify that $\Phi(\varepsilon)=\varepsilon$. Suppose $\Phi(\varepsilon)$ is a non-trivial endomorphism. Then $\Phi(\varepsilon)(b_j)=b\not=1$ for some $j\in I$. Choose an automorphism $\alpha$ mapping the free generator $b_j$ to $b_j^{-1}$. Then, on the one hand we have $\varepsilon=\varepsilon\circ\sigma$ and hence, $\Phi(\varepsilon)=\Phi(\varepsilon\circ\sigma)=\Phi(\varepsilon)\circ\sigma$, because $\Phi(\sigma)=\sigma$. On the other hand we have $(\Phi(\varepsilon)\circ\sigma)(b_j)=b^{-1}\not=b=\Phi(\varepsilon)(b_j)$, because $b^2\not=1$ provided $n$ is odd. This contradiction shows that the image of the trivial endomorphism is the trivial endomorphism.

Now suppose that $\varepsilon(b_i)=a^{k_i}$ for some element $a\in B(m,n)$ and for each $i\in I$, where $k_i\in \mathbb{Z}$. We assume also that $\varepsilon(b_j)\not=1$ for some $j\in I$.

Let $d$ be a generator for the cyclic subgroup of additive group $\mathbb{Z}$ of integers generated by the set of the numbers $\{k_i\}_{i\in I}$. Obviously, applying some elementary transformations to the sequence of numbers $\{k_i\}_{i\in I}$ we can obtain a new sequence $\{s_i\}_{i\in I}$ such that $s_1=d$ and $s_i=0$ for $i\not=1$. Note that to any Nielsen transformation of system of generators $\{b_i\}_{i\in I}$ corresponds a Nielsen transformation of the system $\{\varepsilon(b_i)\}_{i\in I}$. The Nielsen transformations of the system $\{\varepsilon(b_i)\}_{i\in I}$ lead to the corresponding elementary transformations of the exponents $\{k_i\}_{i\in I}$ and vise versa. Consequently, there exist such Nielsen transformations of the system of free generators $\{b_i\}_{i\in I}$ which lied to the system of new free generators $\{y_i\}_{i\in I}$ of $B(m,n)$ satisfying to the conditions $\varepsilon(y_1)=a^d$ and $\varepsilon(y_i)=1$ for $i\not=1$.

\textbf{Case} 1. Let the period $n$ of the group $B(m,n)$ is a prim number. Consider the endomorphism $\alpha$ given by the relations $\alpha(y_1)=y_1$ and $\alpha(y_i)=1$ for $i\not=1$. Then $\Phi(\alpha)$ is a non-trivial endomorphism, because $\alpha$ is non-trivial. According to \eqref{e3} the element $\alpha(y_1)^{-1}\cdot\Phi(\alpha
)(y_1)$ belongs to the centralizer of $\Phi(\alpha
)(y_i)$ for all $i\in I$. The centralizer of $\Phi(\alpha
)(y_1)$ is a cyclic group of order $n$. Therefor, the elements $\alpha(y_1)^{-1}$ and $\Phi(\alpha
)(y_i)$ belongs to the centralizer of $\Phi(\alpha
)(y_1)$ for each $i$. Consequently, for any $i\in I$ there is an integer $t_i$ such that the equality $\Phi(\alpha)(y_i)=y_1^{t_i}$ holds. Evidently, the equality $\alpha\circ\alpha=\alpha$ also holds. Hence, $\Phi(\alpha)\circ\Phi(\alpha)=\Phi(\alpha)$. So, we have $t_it_1\equiv t_i( \operatorname{mod}n)$ for all $i\in I$. The integers $t_1$ and $t_1-1$ are relatively prime and $n$ is a prim number. Therefor, from $t_1^2\equiv t_1( \operatorname{mod}n)$ it follows that $t_1\equiv 0( \operatorname{mod}n)$ or $t_1\equiv 1( \operatorname{mod}n)$. Since $\Phi(\alpha)$ is a non-trivial endomorphism, we obtain that the congruence $t_1\equiv 1( \operatorname{mod}n)$ holds. Now for any $j\not=1$, $j\in I$ consider the Nielsen automorphism $\lambda_j$ given by equalities $\lambda_j(y_1)=y_1y_j$ and $\lambda_j(y_i)=y_i$ for $i\not=1$. It is easy to check that $\alpha\circ\lambda_j=\alpha$. Hence, we obtain $\Phi(\alpha\circ\lambda_j)=\Phi(\alpha)\circ\lambda_j=\Phi(\alpha)$ by virtu $\Phi(\lambda_j)=\lambda_j$. Therefor, $\Phi(\alpha)(\lambda_j(y_1))=\Phi(\alpha)(y_1)$, that is $y_1^{t_1+t_j}=y_1^{t_1}$ for $j\not=1$. This means that $t_j\equiv0( \operatorname{mod}n)$ for $j\not=1$. Thus, $\Phi(\alpha)(y_i)=y_1^{t_i}$, where $t_1\equiv 1( \operatorname{mod}n)$ and $t_i\equiv0( \operatorname{mod}n)$ for $i\not=1$. So, we get $\Phi(\alpha)=\alpha$.

Now let $b$ be an arbitrary non-commuting with $a$ element of the group $B(m,n)$ and $\gamma$ be an endomorphism of $B(m,n)$ given on the free generators by the formulae $\gamma(y_1)=a^d$ and $\gamma(y_i)=b$ for $i\not=1$. It is easy to verify that $\varepsilon=\gamma\circ\alpha$. Since $\operatorname{Im}(\gamma)$ is a non-cyclic subgroup, then $\Phi(\gamma)=\gamma$. Therefor, $\Phi(\varepsilon)=\Phi(\gamma)\circ\Phi(\alpha)=\gamma\circ\alpha=\varepsilon$. In the Case 1 the equality \eqref{e} proved.

\textbf{Case} 2. Let the period $n$ of $B(m,n)$ be a composite number and $n=n_1n_2$, where $1<n_1,n_2<n$. Consider the endomorphism $\delta_1 :B(m,n)\to B(m,n)$ given on the generators by the equalities $$\delta_1(y_1)=a^{d}\quad \mbox{and}\quad \delta_1(y_j)=y_k^{n_1}\quad \mbox{for}\quad j\not=1,$$ where $y_k$ is a fixed and non-commuting with $a^d$ generator of $B(m,n)$. Consider also the endomorphism $\delta_2 :B(m,n)\to B(m,n)$ defined by the equalities $$\delta_2(y_1)=y_1\quad \mbox{and}\quad\delta_2(y_j)=y_j^{n_2}\quad \mbox{for}\quad j\not=1.$$
Since the images of the endomorphisms $\delta_1$, $\delta_2$ are not cyclic, then $\Phi(\delta_i)=\delta_i$, $i=1, 2.$ From the definitions of endomorphisms $\delta_i$ immediately follows that $\delta_1\circ\delta_2=\varepsilon$. Therefor, we get $\Phi(\varepsilon)=\varepsilon.$ Theorem \ref{T} is proved.

\end{document}